\documentclass[11pt]{article}
\usepackage{amsmath, amssymb, amsthm, mathrsfs}
\usepackage{geometry}
\usepackage{tikz}
\usetikzlibrary{arrows.meta,decorations.pathmorphing,patterns}

\newtheorem{theorem}{Theorem}
\newtheorem{lemma}[theorem]{Lemma}
\newtheorem{prop}[theorem]{Proposition}

\newcommand{\N}{\mathbb{N}}

\newcommand{\rep}[2]{r_{#1}(#2)} 
\newcommand{\rept}[2]{\tilde{r}_{#1}(#2)} 

\renewcommand{\S}{\mathscr{S}}

\title{Three Questions of Erd\H{o}s-Nathanson on Asymptotic Bases of Order 2}
\author{Daniel Larsen}
\date{}
\begin{document}

\maketitle

\begin{abstract}
    We study three natural properties that measure the robustness of asymptotic bases of order 2: having divergent representation function, being decomposable as a union of two bases, and containing a minimal basis. Erd\H{o}s and Nathanson showed that sufficiently rapid growth of the representation function (specifically, $r_A(n) \ge C \log n$ for appropriate $C$) implies both decomposability and the existence of a minimal basis. We prove that for weaker growth rates, these three properties are independent. The construction proceeds via an inductive scheme on exponentially growing intervals.
\end{abstract}

\section{Introduction}
A set $A \subseteq \N$ is an \textit{asymptotic basis} (always of order 2 in this paper) if there exists $n_0$ such that every $n > n_0$ can be written as $n = a + a'$ with $a\le a' \in A$. The \textit{representation function} $\rep{A}{n}$ counts the number of such representations. 
We define $\rept{A}{n}$ to be the number of representations $a+a'$ with $a'/a\in [1,100]$.
A subset $B \subseteq A$ is a \textit{minimal asymptotic basis} if $B$ is an asymptotic basis, but no proper subset of $B$ is an asymptotic basis.

We consider three natural properties of a basis:
\begin{enumerate}
    \item[(P1)] \textbf{Divergent representation function:} $\rep{A}{n} \to \infty$ as $n \to \infty$.
    \item[(P2)] \textbf{Decomposability:} $A = B \cup C$ where $B$ and $C$ are disjoint asymptotic bases.
    \item[(P3)] \textbf{Existence of minimal basis:} $A$ contains a minimal asymptotic basis.
\end{enumerate}

Properties (P1) and (P2) are both indications of robustness.
Property (P3) is less obviously of the same kind. In particular, a minimal asymptotic basis is as far from being robust as possible while clearly satisfying (P3).
However, if every integer has a sufficient number of representations in $A$, then there are many choices of asymptotic subbases, so one might expect a minimal one.
Indeed,
Erd\H{o}s and Nathanson \cite{EN88} showed that for every constant $C > (\log \frac 43)^{-1}$, if $\rep{A}{n} > C \log n$ for all sufficiently large $n$, then $A$ satisfies both (P2) and (P3). This mysterious constant suggests a linkage between the three notions. However, in this paper, we will show that
for slower growth rates, the three properties are mutually independent:

\begin{theorem} \label{thm:main}
There exist asymptotic bases satisfying all possible combinations of (P1), (P2), and (P3).
\end{theorem}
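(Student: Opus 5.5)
There are eight combinations of (P1), (P2), (P3), and I will build a separate asymptotic basis for each one. All eight use a single framework. Partition $\N$ into exponentially growing blocks $I_k=[10^k,10^{k+1})$. The set $A$ will be the union of pieces $A_k\subseteq I_k$. For most $n$, the representations counted by $\rept{A}{n}$ with $a'/a\in[1,100]$ then come only from pieces $A_k+A_{k'}$ with $|k-k'|\le 2$. This localizes the analysis, so each property can be arranged block by block through an induction on $k$.

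The basic building block is a "thin" local basis. It is a set $S_k\subseteq I_k$ of size about $10^{k/2}$ (a Sidon-like or square-grid set) such that $S_k+S_k$ together with $S_k+S_{k\pm1}$ covers the relevant dyadic range exactly once, or boundedly often. Taking unions of several independent, disjoint translates or perturbations of such blocks gives $r_A(n)$ anywhere from bounded to slowly growing. Each property then has its own switch.

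\textbf{(P1).} To make $r_A(n)\to\infty$, use $m_k\to\infty$ disjoint copies in block $k$, with $m_k$ growing slowly so that $m_k=o(\log 10^k)$ and the Erd\H{o}s--Nathanson threshold is never reached. To make (P1) fail, insert infinitely many "witness" integers $n_j$ whose only representations are a fixed bounded number, for instance exactly one.

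\textbf{(P2).} To make (P2) hold, colour the copies alternately, so that each colour class is already a basis on its own. To make (P2) fail, ensure that for infinitely many $n$ the representations of $n$ are arranged like an odd cycle. Concretely, choose elements $x_1,\dots,x_5$ and targets $n$ represented exactly as $x_i+x_{i+1}$. In any $2$-colouring, some $x_i$ and $x_{i+1}$ receive different colours. Repeated over infinitely many disjoint gadgets, this prevents both classes from being bases. These gadgets must coexist with large $r_A$ elsewhere, and the gadget targets themselves can be given many representations when (P1) is required. To do this, use larger odd-cycle or hypergraph non-2-colourable structures (Fano-plane-like gadgets whose representation count grows slowly) instead of single cycles.

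\textbf{(P3).} To make (P3) hold, extract a minimal basis greedily, block by block. Using the locality, I choose in each block a sub-configuration in which every element is needed for some $n$, and I verify that the union is minimal. To make (P3) fail, use the standard obstruction: an infinite descending chain. Arrange that any asymptotic basis $B\subseteq A$ must contain, for each $k$, some element of a gadget $G_k$. In addition, for any candidate basis, one element of some gadget can always be removed because $G_{k+1}$ redundantly covers its targets. This is the construction of Erd\H{o}s--Nathanson type bases with no minimal subbasis, adapted to the block framework.

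The main obstacle is showing that these gadgets do not interfere. The cross-block sums $A_k+A_{k'}$ with $|k-k'|$ large, which are not controlled by $\rept{A}{\cdot}$, must not accidentally destroy the witnesses. In particular they must not give extra representations to the targets that make (P1) or (P2) fail, and they must not supply the redundancy that kills minimality. My first approach is to place the gadget targets in sparse residue classes modulo a large modulus $M_k$, chosen inductively, and to keep $A$ sparse enough, with $|A\cap[1,x]|\ll x^{1/2+\epsilon}$. A counting or random-shift argument should then guarantee that the unbalanced sums $a+a'$ with $a'/a>100$ miss these targets. Once locality is secured, each of the eight cases reduces to a finite combinatorial check within consecutive blocks, and the induction on $k$ assembles $A$.
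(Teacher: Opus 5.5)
There is a genuine gap, and in the four cases where (P3) holds it is structural. Your framework makes the controlled targets represented only by balanced sums, and you extract the minimal basis block by block so that each element is ``needed for some $n$''. But $D$ is a minimal \emph{asymptotic} basis only if, for every $d\in D$, the set $D\setminus\{d\}$ fails to represent \emph{infinitely many} $n$. So $d$ must be the only usable summand in representations $n=d+(n-d)$ along a sequence $n\to\infty$, and these are necessarily unbalanced sums. Your locality is designed so that every large $n$ is represented in $D$ by a balanced sum. Then deleting $d\in I_k$ can only affect $n<2\cdot 10^{k+3}$, so $D\setminus\{d\}$ is still an asymptotic basis, and the union you assemble is never minimal. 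The paper relies on exactly the unbalanced sums you try to suppress. The only representations of the witness $N_{k+1}=4N_k$ are $s+(N_{k+1}-s)$ with $s\in F_k$. With $\psi\equiv 1$, every $b$ of the intended minimal basis lies in infinitely many $F_k$ that meet that basis only in $b$. In the indecomposable case with bounded $\phi$, this is arranged by scheduling the singleton $\{b\}$ to recur infinitely often as $G_k$.

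The second gap is the construction of witnesses whose \emph{entire} representation set is prescribed, which all eight cases need. A residue-class or random-shift count only bounds the expected number of unbalanced representations by $O(1)$. It does not make it zero, and it says nothing about the balanced sums from blocks meant to cover everything with growing multiplicity. The paper instead uses sets of positive density and a deterministic device. At stage $k$, with $N=N_k$, nothing is placed in $(N,4N/3)$ or $(2N,8N/3)$. The new block is the $X_1$-part (for $C$, the $X_2$-part) of $(4N/3,2N)\cup(8N/3,3N)\cup\bigl(4N-((0,N)\setminus A(k-1))\bigr)$, together with $4N-G_k$ (resp.\ $4N-H_k$). Reflecting the \emph{complement} of $A(k-1)$ kills every representation of $4N$ except those through $F_k$. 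Meanwhile the random thinning keeps $\tilde r_B(n),\tilde r_C(n)\ge h(n)$ for all other large $n$.

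Letting $F_k$ run through all $\phi(k)$-subsets of earlier elements of $A$ lying in $[\psi(k),N_k]$ (with one element from each of $B$ and $C$ in the decomposable cases) supplies the forcing statement your negative cases lack. Every subbasis must meet every such set, so it omits fewer than $\phi$ elements of $B$ (or of $C$) in each window. This gives indecomposability by pigeonhole. Your fallback to non-2-colourable hypergraphs is the right idea, realized here by complete $\phi(k)$-uniform hypergraphs; the odd-cycle gadget is unnecessary, since infinitely many $n$ with $r_A(n)=1$ already defeat decomposability. It also gives non-minimality: every subbasis retains many balanced representations of non-witnesses, and because $\psi\to\infty$ its least element lies in only finitely many $F_k$.
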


In \cite[Question 2]{EN88}, Erd\H{o}s and Nathanson asked if (P1) implies (P2). This was previously shown not to be the case by the author, 
using a multiagent system built on Claude 4.5 and Gemini 3 Pro, with a construction based on \cite{EN88}. 
In \cite{EN79}, Erd\H{o}s and Nathanson speculated 
that (P1) might not be enough to imply (P3). 
In the preprint \cite{LL26}, we used a different construction to confirm this idea, showing that even 
$r_A(n) > \epsilon \log n$ for large $n$ is not enough to imply (P3).  
Finally, \cite[Question 4]{EN88} asks whether (P2) implies (P3). This paper shows that the answer is no.

All eight cases of Theorem~\ref{thm:main} follow from a single construction.  We will need to define some auxiliary functions 
$\phi,\psi\colon \N\to\N$
which will depend on which 
properties we are trying to achieve.
To begin with, we take $h(n) := \lfloor n\cdot 10^{-8}\rfloor$.
If we want (P1) to hold, we choose $\phi(n):=n$; otherwise, we use the constant function $2$ if  (P2) and $1$ otherwise.
If we want (P3) to hold, we choose $\psi$ to be the constant function $1$; otherwise, we choose $\psi(n):=n$.

Let $N_i = 4^{i+1}$.
Let $\S$ be any selection mechanism for producing sets of integers 
$$G_k\subset \bigcup_{i=1}^{k-1} \bigl(B_i \cup (N_{i+1}-G_i)\bigr)$$
and 
$$H_k\subset \bigcup_{i=1}^{k-1} \bigl(C_i \cup (N_{i+1}-H_i)\bigr)$$
given any sets of integers $B_i,C_i,G_i,H_i$ for $1\le i\le k-1$.

\begin{theorem}
\label{construction}
If $\S$ has the property that for all $k$ and for all input sets, $G_k$ and $H_k$ are disjoint and 
no element of $F_k := G_k \cup H_k$ is greater than $N_k$, then
there exist sets $B_1,B_2,\ldots$ and $C_1,C_2,\ldots$ such that setting $G_k,H_k$ to be the outputs of $\S$ with these inputs (including $G_i,H_i$ for $i<k$), the following conditions are satisfied by
$B := \bigcup_{i=1}^{\infty} \bigl(B_i \cup (N_{i+1}-G_i)\bigr)$ and $C :=  \bigcup_{i=1}^{\infty} \bigl(C_i \cup (N_{i+1}-H_i)\bigr)$:

\begin{enumerate}
        \item Each $B_k, C_k$ consists of positive integers strictly between $N_{k}$ and $N_{k+1}$.
        \item $B \cap C = \emptyset$.
        \item  $\rept{B}{n}, \rept{C}{n} \ge h(n)$ for all sufficiently large elements of $\N\setminus\{N_i\}$.
        \item For sufficiently large $k$, all representations of $N_{k+1}$ as sums of two elements of $B\cup C$ meet $F_k$.
\end{enumerate}
\end{theorem}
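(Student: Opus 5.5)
We build the sets $B_k, C_k$ by induction on $k$. At stage $k$ the sets $B_i, C_i, G_i, H_i$ for $i<k$ are already fixed, so $\S$ outputs $G_k, H_k$, and we then choose $B_k, C_k \subset (N_k, N_{k+1})$. The reflected sets $N_{k+1}-G_k$ and $N_{k+1}-H_k$ also land in a known range. Every element of $F_k$ is at most $N_k$ and, being drawn from earlier pieces, lies above some fixed threshold. Hence the reflections lie in $[N_{k+1}-N_k, N_{k+1})$, which is inside the interval $(N_k, N_{k+1})$.

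\textbf{Step 1: the window for $B_k, C_k$.} We confine $B_k \cup C_k$ to the window $W_k=(N_k, \tfrac12 N_{k+1} - \text{margin})$, say $(4^{k+1}, 2\cdot 4^{k+1})$ together with a little room above it. Then $B_k\cup C_k$ never meets the reflected sets, which gives condition 1. Since $G_k\cap H_k=\emptyset$ and the reflections of different stages occupy disjoint ranges, condition 2 also follows.

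\textbf{Step 2: making sums avoid $N_{k+1}$.} For condition 4, consider a representation $N_{k+1}=a+a'$ with $a\le a'$ in $B\cup C$. Then $a' \ge N_{k+1}/2 = 2N_k$. By the choice of windows, the only elements of $B\cup C$ in $[2N_k, N_{k+1})$ are the reflections $N_{k+1}-F_k$. (We must also make sure that the reflections from stage $k-1$, namely $N_k - F_{k-1}$, stay below $2N_k$; they do.) So $a' = N_{k+1}-f$ with $f\in F_k$, and therefore $a=f\in F_k$ if $f\in B\cup C$. Elements $\ge N_{k+1}$ are too big to appear in the representation. This verifies condition 4.

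\textbf{Step 3: the representation bound (main obstacle).} Condition 3 says every $n\notin\{N_i\}$ needs at least $h(n)=\lfloor 10^{-8}n\rfloor$ representations with ratio in $[1,100]$, in both $B$ and $C$. We cannot use dense blocks near $N_{k+1}$ because of Step 2. Instead we take $B_k$ and $C_k$ to be disjoint, very dense subsets of $W_k$, for instance splitting $W_k$ into alternating residue classes mod $2$ minus a tiny set.

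Take $n\in(N_{k+1}, N_{k+2})$ with $n\neq N_{k+1}$. If $n$ lies in $[2N_k, 4N_k]$-type ranges, it is covered by sums from $W_{k-1}+W_k$ or $W_k+W_k$, with ratio at most $100$. The delicate range is $n$ just below or above $N_{k+1}$. Sums $W_k+W_k$ cover $(2N_k, 4N_k-2\text{margin})$, which reaches close to $N_{k+1}=4N_k$. Just above $N_{k+1}$ we use $W_k + (\text{reflections})$ or $W_{k+1}$ paired with $W_{k-1}$ (ratio $\le 16$), choosing the windows slightly larger than a factor-$2$ span so these ranges overlap. In each range the count of pairs is linear in $n$ with density far above $10^{-8}$. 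Near $n= N_{k+1}$ exactly, the count drops to zero only at the point itself.

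The hard part is the boundary near $N_{k+1}\pm$ small. There the available pairs $(a, n-a)$ with $a'$ in the gap region are few. To handle it, I would instead let $W_k$ extend up to $2N_k-1$ and also include elements slightly above $2N_k$ that avoid $N_{k+1}-(B\cup C)$. That is, choose $B_k$ and $C_k$ inside $(2N_k, 4N_k)$ so that no element $x$ has $N_{k+1}-x\in B\cup C$ unless $N_{k+1}-x\in F_k$. With upper density $1/2$ available on each side, these sums fill the neighbourhood of $N_{k+1}$ except $N_{k+1}$ itself. Checking the linear count uniformly across these regimes is the main computation; the constant $10^{-8}$ leaves ample room.
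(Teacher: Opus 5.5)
\textbf{Gap: Property 3 near $N_{k+1}$ is asserted, not proved.}

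With the windows of Steps 1--2 (nothing but reflections in $[2N_k,N_{k+1})$), Property 4 is immediate, but Property 3 then fails near $N_{k+1}$. Take $n=N_{k+1}\pm\Delta$ with $0<\Delta<10^{-8}N_k$. Every representation with ratio at most $100$ either uses one of the reflected points $N_{k+1}-F_k$, or has both summands within $\Delta$ of $2N_k$. So $\rept{B}{n}\le |F_k|+\Delta<h(n)$ whenever $|F_k|$ is small, and $F_k$ may even be empty.

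Your repair is to put points on both sides of $N_{k+1}/2$, subject to the constraint that $x$ and $N_{k+1}-x$ are never both present outside $F_k$. That is the right constraint; the paper enforces the same thing. But the decisive claim that ``with upper density $1/2$ on each side these sums fill the neighbourhood of $N_{k+1}$ except $N_{k+1}$ itself'' is only asserted. It fails for the structured sets one would naturally write down. Recentre at $N_{k+1}/2$ and call the set $U$. The constraint is $U\cap(-U)=\emptyset$, and the number of representations of $N_{k+1}+\Delta$ is $(1_U*1_U)(\Delta)$. This function vanishes at $\Delta=0$.
\begin{itemize}
\item If $U$ is a union of $r$ intervals, the function is $2r$-Lipschitz, so it is $O(r|\Delta|)$.
\item If $U$ is a union of residue classes $R$ mod $m$, the constraint forces $R\cap(-R)=\emptyset$, hence no representations at all for $\Delta\equiv 0\pmod m$.
\end{itemize}
What is needed is a pseudorandom $U$ giving $\gg N_k$ representations uniformly in $\Delta\neq 0$, and for $B$ and $C$ separately. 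Your parity example also breaks Property 3 away from $N_{k+1}$: if $B_k$ consists of even numbers, odd $n\in(5N_k/2,4N_k)$ can be represented in $B$ only through the few reflected points.

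\textbf{What the paper does.} The missing ingredient is the random three-colouring and Lemma~\ref{intersection}. Write $N=N_k$. We have $B_i\subset X_1$ and $C_i\subset X_2$, and the reflected sets $N_{i+1}-F_i$ for $i<k$ avoid $(N/4,3N/4)$. Hence every $y\in X_3(N/4,3N/4)$ is guaranteed to lie outside $A(k-1)$. So one can take $B_k\supset X_1(4N-X_3(N/4,3N/4))$ and $C_k\supset X_2(4N-X_3(N/4,3N/4))$ without creating new representations of $4N$.

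For $4N+\Delta$ with $\Delta\neq 0$, one counts $y$ with $y\in X_3$, $y+\Delta\in X_1$ and $4N-y\in X_1$. On a set of $\gg N$ values of $y$, these triple events are independent across $y$ and each has probability $1/27$. A Chernoff bound then gives $\gg N$ representations except with probability $(1-\epsilon)^N$, uniformly in $\Delta$, and Borel--Cantelli handles all large $k$. For $\Delta=0$ the requirements $y\in X_3$ and $y\in X_1$ are incompatible, which is exactly why $4N$ is missed.

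Lemma~\ref{sum} handles the random blocks $(4N/3,2N)$ and $(8N/3,3N)$, whose reflections $(2N,8N/3)$ and $(N,4N/3)$ are kept empty. Together these cover $[3N/2,6N)\setminus\{4N\}$ for $B$ and $C$ separately.

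Your antisymmetric-set idea can probably be completed by an analogous random choice: one point from each pair $\{x,N_{k+1}-x\}$, then a random $B/C$ split. As written, though, the central estimate of the theorem is missing.
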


We give a proof of this theorem in section 4 by means of a randomized iterative construction of 
$B(k) = B\cap [1,N_{k+1}]$ and $C(k) = C\cap [1,N_{k+1}]$. 

We conclude this section with a table summarizing the essential features of the eight constructions given in sections 2 and 3.
\begin{center}
\begin{tabular}{|cccccc|}
\hline
(P1) & (P2) & (P3) & $A$ & $\phi(n)$ & $\psi(n)$ \\
\hline
T & T & F & $B \cup C$ & $n$ & $n$ \\
F & T & F & $B \cup C$ & 2 & $n$ \\
T & T & T & $B \cup C$ & $n$ & 1 \\
F & T & T & $B \cup C$ & 2 & 1 \\
T & F & F & $B$ & $n$ & $n$ \\
F & F & F & $B$ & 1 & $n$ \\
T & F & T & $B$ & $n$ & 1 \\
F & F & T & $B$ & 1 & 1 \\
\hline
\end{tabular}
\end{center}

\section{Decomposable Case}

We now explain how Theorem \ref{construction} implies the four cases of Theorem \ref{thm:main} in which $A$ is decomposable.
In every case, it is a matter of describing the appropriate selection mechanism $\S$.

At stage $k$, then, $B_i,C_i,G_i,H_i$ are given for $i<k$, and we need to describe how to choose $G_k$ and $H_k$. 
We define $B(k-1) = \bigcup_{i=1}^{k-1} (B_i  \cup (N_{i+1}-G_i))$ and likewise for $C(k-1)$.
We say a set $S$ is \emph{$k$-eligible} if 
$S = \{s_1,\ldots,s_{\phi{(k)}}\}\subset B(k-1)\cup C(k-1)$ 
where 
\begin{equation}
\label{eligibility}
\psi(k)\le s_1<\cdots<s_{\phi(k)}\le N_k
\end{equation}
and $|S\cap B(k-1)|, |S\cap C(k-1)|\ge 1$.
A set is \emph{cumulatively} $k$-\emph{eligible} if it is $i$-eligible for some $i\le k$.

Let $k_0\in \N\cup\{\infty\}$ be minimal such that for all $k> k_0$, there are at least $k$ sets which are $k$-eligible.
Then we set  $F_k = \emptyset$ for $k\le k_0$.
Otherwise,  we choose $F_k$ with largest element minimal among all cumulatively $k$-eligible sets which have not yet occurred.
We then define $\S$ to produce
$$G_k := F_k \cap B(k-1),\ H_k := F_k \cap C(k-1).$$

\begin{lemma}
\label{k0}
The value $k_0$ is finite.
\end{lemma}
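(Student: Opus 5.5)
The plan is to show directly that for every sufficiently large $k$ the number of $k$-eligible sets is at least $k$. The only input is the lower bound on $\rept{B}{n}$ and $\rept{C}{n}$ from condition 3 of Theorem~\ref{construction}, combined with the localization in condition 1. One point needs care first: $k_0$ is defined in terms of the sets the construction produces. I will therefore run Theorem~\ref{construction} with the mechanism $\S$ and then verify the counting statement for the resulting $B,C$. The count of $k$-eligible sets depends only on $B(k-1)$ and $C(k-1)$, and condition 3 holds for any admissible $\S$. So it suffices to produce many elements of $B(k-1)$ and of $C(k-1)$ in $[\psi(k),N_k]$.

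\textbf{Step 1 (localization).} I will check that $B\cap[1,N_k]\subseteq B(k-1)$, and likewise for $C$. For $i\ge k$, condition 1 gives $B_i\subset (N_i,N_{i+1})$, so $B_i$ lies above $N_k$. Also $G_i\subseteq F_i$ has elements at most $N_i$, so every element of $N_{i+1}-G_i$ is at least $N_{i+1}-N_i=3N_i>N_k$. Hence every element of $B$ that is at most $N_k$ comes from a piece with index $i\le k-1$.

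\textbf{Step 2 (many elements).} Take $n=N_k-1$, which is not of the form $N_i$. For large $k$, condition 3 gives $\rept{B}{n}\ge h(n)\approx 4^{k+1}\cdot 10^{-8}$. Each representation $n=a+a'$ with $a'/a\in[1,100]$ has $a\ge n/101$, and distinct representations have distinct $a$. Thus $B$ contains at least $h(n)$ elements of $[n/101,N_k]$. Since $\psi(k)\le k<n/101$ for large $k$, these elements all lie in $[\psi(k),N_k]\cap B(k-1)$ by Step 1. The same argument applies to $C$, giving $C(k-1)$ at least $h(n)$ such elements. Write $M\ge 2h(n)$ for the size of $(B(k-1)\cup C(k-1))\cap[\psi(k),N_k]$; here $B$ and $C$ are disjoint by condition 2. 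Note that $M$ grows exponentially in $k$.

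\textbf{Step 3 (counting).} In the decomposable section $\phi(k)\in\{2,k\}$, so $\phi(k)\ge 2$ and the requirement of one element from each of $B(k-1)$ and $C(k-1)$ can be met. Fix one $b\in B(k-1)$ and one $c\in C(k-1)$ in the range. Adjoin any $\phi(k)-2$ further elements of the range; every such choice is a $k$-eligible set, and different choices give different sets.
\begin{itemize}
\item If $\phi=2$, there are already at least $h(n)^2\ge k$ eligible sets, using pairs $\{b,c\}$.
\item If $\phi(k)=k\ge 3$, there are at least $\binom{M-2}{k-2}\ge M-2\ge k$ eligible sets, valid once $M-2>k-2\ge 1$.
\end{itemize}
This inequality holds for all large $k$, so only finitely many $k$ fail the condition, and $k_0$ is finite.

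The main obstacle I expect is not the counting but the apparent circularity: $\S$ sets $F_k=\emptyset$ for $k\le k_0$, while $k_0$ is read off from the output. I would handle this by observing that Steps 1--3 use only conditions 1--3 of Theorem~\ref{construction}, which hold for every admissible mechanism. Consequently the "at least $k$ eligible sets" property holds for all large $k$ whatever the earlier $F_i$ were. Then $k_0$ is a well-defined finite number, consistent with the mechanism, for example by taking $\S$ to output empty sets until the count condition has held from some fixed stage on.
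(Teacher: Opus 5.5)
Your proof is correct and rests on the same idea as the paper: Theorem~\ref{construction} forces $B(k-1)$ and $C(k-1)$ each to have far more than $\phi(k)$ elements in $[\psi(k),N_k]$, so there are at least $k$ eligible sets for all large $k$. The paper argues by contradiction: if $k_0=\infty$, then $\S$ is the trivial mechanism, which sidesteps the circularity you raise, and it uses only the basis bound $|B(k-1)|\gg N_k^{1/2}$; you instead argue directly, get a linear count from condition 3 at $n=N_k-1$, and spell out the counting of sets that meet both $B(k-1)$ and $C(k-1)$.
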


\begin{proof}
If not, $\S$ always produces empty sets.
Applying Theorem~\ref{construction}, we see that for large $k$, $B(k-1)$ has $\gg N_k^{1/2}$ elements,
and, in particular, many more than $\phi(k)$ elements which are greater than $\psi(k)$. This gives a contradiction.
\end{proof}

Every cumulatively $k$-eligible set eventually occurs as an $F_i$ for some $i$. 

Let $B$ and $C$ be defined by Theorem~\ref{construction} applied to the selection mechanism $\S$.
Set $A = B\cup C$. Note that by Property 3 of Theorem~\ref{construction},
$\rep An$ tends to $\infty$ or not according to whether $\phi$ tends to $\infty$.

We claim $B$ is a basis.
Indeed, $\rep{B}{n} \rightsquigarrow \infty$, by which we mean that 
$$\liminf_{n\in \N\setminus \{N_i\}} r_B(n)= \infty.$$
However, we also know for every large $k$, $N_{k+1} - G_k$ and $G_k$ are contained in $B(k)$, so $N_{k+1}$ has at least one representation in $B+B$.
The same argument works for $C$, and this implies decomposability of $A$.

\subsection{Bases with no minimal subbasis}
Suppose $D$ is a subbasis of $A$. Since $D+D$ contains $N_i$ for all $i$ sufficiently large, it follows that for all sufficiently large $k$ and all sets $S$ which are $k$-eligible, $D$ meets $S$.

Let $M$ be a positive integer. Let $k$ be the smallest integer with $N_k > M$. Suppose 
$$|(B\setminus D)\cap [M,100M]| \ge \phi(k+4)$$
and likewise for $C$. Then there exists a subset of $[M,100M]$ which is $k+4$-eligible
which does not meet $D$.  We conclude that for sufficiently large $M$, 
$$|(B\setminus D)\cap [M,100M]|<\phi(k+4)$$ or 
$$|(C\setminus D)\cap [M,100M]|<\phi(k+4).$$ We assume without loss of generality that the first is true.

Now delete the smallest element of $D$ to obtain $D'$. 
Fix $t\in \{0,1,2,\ldots 100\}$. Assume first that $101M-t$ is not of the form $N_i$ for any $i$.
By Property 3, $\rept{B}{101M-t} \ge h(101M-t)$ and, consequently, 
$$\rep{D\cap [M,100M]}{101M-t} > h(101M-t)-\phi(k+4) > 1.$$
Therefore, 
$$\rep{D'}{101M-t} \ge \rep{D}{101M-t}-1>0.$$
Now suppose that $101M-t = N_i$ where $i$ is sufficiently large. In this case we know that $\psi$ goes to $\infty$ and therefore for sufficiently large $k$,
the smallest element of $D$ is not contained in any $k$-eligible set. Therefore, 
$\rep{D'}{101M-t} = \rep{D}{101M-t} > 0$, making $D'$ a basis. It follows that $D$ is not minimal.

This concludes the two cases of Theorem~\ref{thm:main} in which (P2) holds but (P3) does not.

\subsection{Bases with a minimal subbasis}
Recall that in this case we set $\psi(n)=1$ for all $n$.
We claim $B$ is a minimal subbasis. Let $b\in B$ be any element. 
There exist infinitely many eligible sets containing $b$ and no other elements of $B$. This gives an infinite sequence of $N_{j_i}$ where $r_B(N_{j_i}) = 1$
and $r_{B\setminus \{b\}}(N_{j_i}) = 0$. Thus $B\setminus \{b\}$ can never be a basis, meaning that $B$ is, indeed, minimal.

\section{Indecomposable case}

Now we handle the four indecomposable cases. In this section, we say a set $S$ is \emph{$k$-eligible} if it is contained in $B(k-1)$ and satisfies \eqref{eligibility}.
With this new definition of eligibility, we usually define $\S$ in the same way as in section 2 except with $H_k=\emptyset$.

In the special case that (P3) holds but not (P1), we fix a surjective function $F\colon \N\to \N$ with all fibers infinite and such that  $\min F^{-1}(n+1) > \min F^{-1}(n)$ for all $n$.
There is some $k_0$ for which $|B(k)| > k-k_0$ for all $k$.
We then make 
$\S$ produce the sequence 
$$\underbrace{\emptyset,\ldots,\emptyset}_{k_0},\{b_{F(1)}\},\{b_{F(2)}\},\{b_{F(3)}\},\ldots,$$
where $G_k=\{b_{F(k-k_0)}\}$ for $k>k_0$, with $b_k$ the smallest element of $B(k+k_0-1)$ greater than $b_1,\ldots,b_{k-1}$.
Note that this definition is not circular because $b_{F(k-k_0)}\in B(k-1)$ since $F(k-k_0)+k_0-1 \le k-1$.

In all cases, we then apply Theorem~\ref{construction} to obtain $B$. (We also obtain $C$, but we do not use it.)
Note that by Property 3 of Theorem~\ref{construction},
$\rep Bn$ tends to $\infty$ or not according to whether $\phi$ tends to $\infty$. 
As before, $B$ is a basis, and every $k$-eligible set appears as $G_i$ for some $i$.
It follows that for all sufficiently large $k$ and all $k$-eligible sets $S$, any subbasis $D$ meets $S$.

Let $M$ be a positive integer. Let $k$ be the smallest integer with $N_k > M$. It is clear that if $D$ is a subbasis of $B$, then 
 for sufficiently large $M$,
\begin{equation}
\label{hyperdyadic}
|(B\setminus D)\cap [M,M^2]| < \phi(2k+1).
\end{equation}

Note that if $B$ decomposes as $B'\cup B''$, then this inequality holds for $D=B'$ and $D=B''$. Since $B'$ and $B''$ are disjoint,
this means that 
$$|B\cap [M,M^2]| = |B'\cap [M,M^2]| + |B''\cap [M,M^2]| < 2\phi(2k+1)\ll k,$$
which is impossible for large $M$ since $B$ is an asymptotic basis. This takes care of indecomposability in all four cases.

\begin{lemma}
\label{subbasis}
For any subbasis $D$, if $D\setminus D'$ has $o(M^2)$ elements in each hyperdyadic interval $[M,M^2]$, then $(D' + D')\cap (\N\setminus \{N_i\})$ has finite 
complement in $(D+D)\cap (\N\setminus \{N_i\}).$
\end{lemma}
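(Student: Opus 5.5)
The approach is to show that every large $n\notin\{N_i\}$ keeps a positive number of "balanced" representations after passing from $B$ to $D$ and then from $D$ to $D'$. I read the lemma with $D'\subseteq D\subseteq B$ and $D$ a subbasis of $B$, as in the surrounding discussion. Each such $n$ then lies in $D'+D'$, which gives the finite-complement statement: both sets contain all large non-$N_i$ integers. The point of the definition of $\rept{B}{n}$ is that it only counts representations $n=a+a'$ with $a'/a\in[1,100]$. Both summands then lie in the window $[n/101,\,n/2]$, so only the elements of the removed sets lying in that window can destroy such representations.

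\textbf{Step 1 (from $B$ to $D$).} Fix a large $n\notin\{N_i\}$. By Property 3 of Theorem~\ref{construction}, $\rept{B}{n}\ge h(n)=\lfloor n\cdot 10^{-8}\rfloor$. Each element $x\in B\setminus D$ lies in at most one unordered pair $\{x,n-x\}$, so removing it kills at most one representation of $n$. Hence
$$\rept{D}{n}\ \ge\ h(n)-\bigl|(B\setminus D)\cap[n/101,n]\bigr|.$$
Since $D$ is a subbasis, the hyperdyadic bound \eqref{hyperdyadic} holds. I apply it with $M=\lceil\sqrt n\,\rceil$: then $M\le n/101$ and $M^2\ge n$ for large $n$, so $[n/101,n]\subseteq[M,M^2]$. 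The number of removed elements in the window is therefore less than $\phi(2k+1)$, where $k\approx\log_4 M\ll\log n$. Since $\phi(m)\le m$ in every case, this is $O(\log n)$, and so $\rept{D}{n}\ge h(n)-O(\log n)$.

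\textbf{Step 2 (from $D$ to $D'$).} By the same one-element-kills-one-pair argument,
$$\rept{D'}{n}\ \ge\ \rept{D}{n}-\bigl|(D\setminus D')\cap[n/101,n]\bigr|.$$
With the same choice $M=\lceil\sqrt n\,\rceil$, the window lies in $[M,M^2]$. The hypothesis then says the subtracted count is $o(M^2)=o(n)$. Combining with Step 1,
$$\rept{D'}{n}\ \ge\ n\cdot 10^{-8}-O(\log n)-o(n)\ >\ 0$$
for all sufficiently large $n\notin\{N_i\}$. In particular $n\in D'+D'$, and this proves the lemma.

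\textbf{Main obstacle.} The crux is choosing the hyperdyadic interval so that a hypothesis phrased as $o(M^2)$ actually controls the window $[n/101,n]$ with an error of $o(n)$. The choice $M\asymp\sqrt n$ does exactly this, and it also makes \eqref{hyperdyadic} applicable for the $B\setminus D$ term. I would check carefully that \eqref{hyperdyadic} is used at the right index $k$, namely the $k$ attached to $M\approx\sqrt n$ rather than to $n$. This does not matter for the conclusion, since $\phi(2k+1)=O(\log n)$ either way. Everything else is the elementary pair-counting observation.
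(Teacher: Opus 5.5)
Your argument is correct and is essentially the paper's proof: Property~3 together with \eqref{hyperdyadic}, applied on a hyperdyadic interval $[M,M^2]$ containing all balanced summands of $n$, leaves $h(n)-O(\log n)$ representations in $D$, and the $o(M^2)$ hypothesis removes only $o(n)$ more; your choice $M=\lceil\sqrt n\,\rceil$ is exactly the paper's covering of $n=M^2-t$ with $0\le t\le 2M-2$. One small slip: the larger summand lies in $[n/2,100n/101]$, not in $[n/101,n/2]$, but your estimates correctly use the window $[n/101,n]$, so nothing breaks.
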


\begin{proof}
Fix $0\le t\le 2M-2$. Assume that $M^2-t$ is not of the form $N_i$ for any $i$.
When $M$ is large, Theorem~\ref{construction} and \eqref{hyperdyadic} imply that
$$\rep{D\cap [M,M^2]}{M^2-t} > h(M^2-t)-\phi(2k).$$
Therefore, 
$$\rep{D'\cap [M,M^2]}{M^2-t} > h(M^2-t)-\phi(2k)-o(M^2) \gg M^2 > 0.$$

\end{proof}

\subsection{Bases with no minimal subbasis}

Let $D$ be any subbasis of $B$, and let $D'\subset D$ consist of all elements of $D$ except the smallest one.
In this case we know that $\psi$ goes to $\infty$ and therefore for sufficiently large $k$,
the smallest element of $D$ is not contained in any $k$-eligible set. Therefore, 
$\rep{D'}{N_i} = \rep{D}{N_i} > 0$ for $i$ sufficiently large. By Lemma~\ref{subbasis}, $D'$ is a basis, so $D$ is not minimal.

This concludes the two cases of Theorem~\ref{thm:main} in which neither (P2) nor  (P3) holds.

\subsection{Bases with a minimal subbasis}
Recall that in this case we set $\psi(n)=1$ for all $n$.

Let $B'$ be a subset of $B$ such that $B\setminus B'$ contains exactly $\phi(k)-1$ elements less than or equal to $N_k$ for all sufficiently large $k$
and $B'\supset N_{k+1}-G_k$ for all $k$. This is possible since $|G_k| < k$ for large $k$. If we are in the bounded representation
function case, this just means $B'=B$.

Let $b\in B'$ be any element. Then $B\setminus (B'\setminus\{b\})$ contains $\phi(k)$ elements $\le N_k$ for all large $k$, meaning that it contains infinitely many
$G_i$. Thus infinitely many $G_i$ fail to meet $B'\setminus \{b\}$, implying it is not a basis.  (Here we use the fact that $\{b\}$ appears infinitely many times as a $G_i$ in the case
of bounded representation function.)

On the other hand, it is clear that $B'$ meets $G_i$ for all sufficiently large $i$. Applying Lemma~\ref{subbasis} with $D=B$, we conclude that $B'$ is a basis
and, in fact, a minimal one.

\section{Construction}
\def\core{\mathrm{core}}

Let $X_1$, $X_2$, $X_3$ partition $\N$ in such a way that every positive integer has an equal chance of being in any of the three, and these choices are independent.
Let $X_i(S) = S\cap X_i$. 
We write $X_i(a,b)$ for $X_i((a,b))$. We also write
$(a,b)_{\core}$ for $(a+(b-a)/100,b-(b-a)/100)$.

\begin{lemma}
\label{sum}
There exists $\epsilon>0$ such that if $I$ and $J$ are intervals of positive integers of size $\ge m$, then the probability that some element of $(I+J)_{\core}$
has fewer than $m/2000$ representations in $X_i(I)+X_i(J)$ is $\ll (1-\epsilon)^m$.
\end{lemma}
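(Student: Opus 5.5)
For each fixed $n\in(I+J)_{\core}$, I will find many disjoint candidate pairs $(a,n-a)$ with $a\in I$ and $n-a\in J$, and apply a Chernoff bound. A union bound over $n$ will finish the proof. Each candidate pair is a representation exactly when both entries land in $X_i$, which happens with probability $1/9$ (or $1/3$ if $a=n-a$). The events for distinct pairs are independent provided the pairs are disjoint as sets.

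\textbf{Step 1: counting candidates.} Write $I=[x,x+|I|)$ and $J=[y,y+|J|)$, and assume without loss of generality that $|I|\le|J|$. The set of $a\in I$ with $n-a\in J$ is an interval whose length is a piecewise-linear "tent" function of $n$ over $I+J$. This tent has plateau value $|I|\ge m$, and its slopes near the ends of $I+J$ are $1$. For $n$ in the core, $n$ is at distance at least $(|I|+|J|)/100\ge |I|/50$ from both endpoints of $I+J$. So $n$ has at least $|I|/50\ge m/50$ candidate pairs.

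\textbf{Step 2: making pairs independent.} The pairs $\{a,n-a\}$ for distinct $a$ are unordered, so at worst $a$ and $n-a$ collide when $I$ and $J$ overlap. Keeping only $a\le n/2$, or discarding half of the pairs, leaves at least $m/100$ pairwise disjoint pairs. Their indicators are independent Bernoulli$(\ge 1/9)$ variables, so the expected count is at least $m/900$. The threshold $m/2000$ is less than half of this mean. By the Chernoff bound, the probability that $n$ has fewer than $m/2000$ representations is at most $e^{-cm}$ for an absolute constant $c>0$ (roughly $c=\frac{1}{8}\cdot\frac{1}{900}$).

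\textbf{Step 3: union bound.} There are at most $|I|+|J|$ values of $n$, so the total failure probability is at most $(|I|+|J|)e^{-cm}$. This is the main obstacle: the bound $\ll(1-\epsilon)^m$ needs $|I|+|J|$ to be at most subexponential in $m$, but the lemma only bounds the sizes from below.

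I would handle the case $|J|\gg |I|$ by improving Step 1 rather than by a polynomial count. For $n$ in the core, the number of candidates is at least $\min(|I|,\,(|I|+|J|)/100)$, which is at least $|I|$ once $|J|\ge 99|I|$. That lower bound alone is not enough. Instead, I would cut $J$ into blocks $J_1,J_2,\dots$ of length about $|I|$ and note that each $n$ only interacts with about two blocks. The random sets on different blocks are independent, and each block contributes a failure probability of at most $O(|I|)e^{-c|I|}$. The uniform statement still costs a factor equal to the number of blocks, which could be large.

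If this cannot be avoided, I would reread the lemma as being applied in the construction with $|I|,|J|\le Cm$, or at least $\log(|I|+|J|)=o(m)$. In that regime the union bound gives $(|I|+|J|)e^{-cm}\ll(1-\epsilon)^m$ for any $\epsilon<1-e^{-c}$, after absorbing the polynomial factor by shrinking $\epsilon$. I would state this size restriction explicitly as the condition under which the proof goes through.
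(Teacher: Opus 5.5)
Your Steps 1 and 2 are the paper's argument. The paper gets $m/100$ representations of each $z\in(I+J)_{\mathrm{core}}$, extracts $m/200$ with pairwise distinct summands, and applies Chernoff to a mean of at least $m/1800$; your constants are slightly better. One small repair: ``keep only $a\le n/2$'' can discard every candidate (e.g.\ when $I$ lies entirely above $n/2$). Use your other option instead: keep one $a$ from each unordered pair $\{a,n-a\}$, and note that distinct unordered pairs with the same sum are automatically disjoint.

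Your concern in Step 3 is real, and the answer to ``if this cannot be avoided'' is that it cannot. The paper's proof stops at the single-element bound and never takes the union over $z$. The statement as literally written is in fact false. Take $I=[1,m]$ and $J=[m+1,m+L]$. Conditional on $X_i(I)$, an $n$ in the core has no representation at all with probability $(2/3)^{|X_i(I)|}\ge(2/3)^m$. These events are conditionally independent for values of $n$ spaced $m$ apart, since the sets $n-I$ are then disjoint subsets of $J$. There are $\gg L/m$ such $n$, so for fixed large $m$ the failure probability tends to $1$ as $L\to\infty$. This contradicts any bound $\ll(1-\epsilon)^m$ with a uniform implied constant.

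So drop the block-decomposition detour, which cannot help. The lemma should instead carry the hypothesis $|I|,|J|\le Cm$, with $\epsilon$ and the implied constant depending on $C$; under it, your union bound finishes the proof. This is all Proposition~\ref{random} needs. There every interval has length between $N/192$ and $2N/3$, so one can take $m=N/192$ with $|I|+|J|\le 256m$, and the resulting factor $O(m)$ is absorbed by shrinking $\epsilon$.
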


\begin{proof}
Every element $z\in (I+J)_{\core}$ has at least $m/100$ representations as $x+y$, where $x\in I$ and $y\in J$.
There are therefore at least $m/200$ ways of writing $z = x_k+y_k$ where $x_k\in I$, $y_k\in J$ and the $2k$ elements 
$x_1,\ldots,x_k,y_1,\ldots,y_k$ are all distinct. For each $k$, the probability that $x_k\in X_i(I)$ and $y_k\in X_i(J)$ is $1/9$,
and these events are independent for distinct $k$. The expected number of representations is therefore at least $m/1800$,
so by the Chernoff bound, the probability that the actual number is less than $m/2000$ decays exponentially in $m$.
\end{proof}

\begin{lemma}
\label{intersection}
There exists $\epsilon > 0$ with the following property.
Let $N$ be a sufficiently large positive integer. Let $I$ and $J$ be intervals of positive integers of size $> N/100$.
If every element of $J$ is less than $4N$,  then the probability that for any 
$i,j\in \{1,2,3\}$,
$$X_i(I) + X_i(4N-X_j(J))\supset (I+4N-J)_\core\setminus \{4N\},$$
and, moreover, every element of the right hand side has greater than $N\cdot 10^{-7}$ representations as a sum of elements as in the left hand side, is greater than 
$1-(1-\epsilon)^N$.
\end{lemma}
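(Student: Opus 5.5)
We reduce the statement to Lemma~\ref{sum}, handling the one genuinely new feature, the correlation between $X_i(I)$ and $4N-X_j(J)$ when $I$ and $4N-J$ overlap, by hand. Write $J' = 4N-J$. This is an interval of positive integers of size $>N/100$, and $X_j(J)$ corresponds to the set $\{y\in J' : 4N-y\in X_j\}$. We fix $i,j$ and a target $z\in (I+J')_{\core}\setminus\{4N\}$, and count pairs $(x,y)\in I\times J'$ with $x+y=z$. There are at least $|I|/100 \ge N/10^4$ such pairs in the core, since both intervals have size $>N/100$. For each pair we need the event $E_{x,y}=\{x\in X_i,\ 4N-y\in X_j\}$, where $4N-y = x-(z-4N)$ is an integer in $J$.

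The events $E_{x,y}$ involve the two integers $x$ and $x' := 4N-y$, which satisfy $x' - x = 4N - z$. Since $z\neq 4N$, we have $x\neq x'$, so each $E_{x,y}$ has probability exactly $1/9$. Dependence between different pairs arises only when they share an integer. Pair $x$ uses $\{x, x+d\}$ with $d=4N-z\ne 0$, so the pairs form a chain in which the pair at $x$ overlaps only the pairs at $x\pm d$. We therefore select a subfamily of pairwise disjoint pairs, taking every other element along each arithmetic progression of difference $d$, or more simply picking greedily. This retains at least a third of the pairs, giving at least $N/(3\cdot 10^4)$ independent events of probability $1/9$. (The case where $J$ and $J'$ are unrelated to $I$ is the same argument, and is exactly the proof of Lemma~\ref{sum}.) The expected count is then at least $N/(2.7\cdot 10^5)$, which exceeds $2N\cdot 10^{-7}$ with room to spare. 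A Chernoff bound gives probability at most $e^{-cN}$ that $z$ has $\le N\cdot 10^{-7}$ representations.

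Finally we take a union bound over the at most $8N$ targets $z$ (all elements are below about $8N$) and the $9$ choices of $(i,j)$. This gives a failure probability of at most $72N e^{-cN}\le (1-\epsilon)^N$ for $N$ large and a suitable $\epsilon<c$. Note that containment in the right-hand side is implied by each element having more than $N\cdot 10^{-7}\ge 1$ representations.

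The main obstacle is the disjointness extraction in the second step. We must check that $z=4N$ is the only degenerate case, where $x=x'$ forces $i=j$ and probability $1/3$, which is the reason it is excluded. We must also check that the overlap structure really is a union of paths, so that a constant fraction of independent pairs survives. Beyond that, the argument is routine Chernoff and union bounds as in Lemma~\ref{sum}.
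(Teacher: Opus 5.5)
\textbf{Gap: your event $E_{x,y}$ omits one of the three required conditions.} Since $X_i(S)=S\cap X_i$, the second summand in $X_i(I)+X_i(4N-X_j(J))$ must satisfy both $4N-y\in X_j$ \emph{and} $y\in X_i$. You impose only the first.

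So your argument shows that each target has many representations in $X_i(I)+(4N-X_j(J))$. That is a strictly larger sumset, and covering it does not give the lemma. The extra filter matters in the application: in Proposition~\ref{random} the reflected part of $B(k)$ is $X_1(4N-X_3(N/4,3N/4))$. A reflected summand must therefore lie in $X_1$ as well as come from $X_3$.

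With the condition restored, a representation is the event that the three integers $x$, $y=z-x$, $x'=x+d$ (where $d=4N-z$) lie in $X_i$, $X_i$, $X_j$ respectively. This has probability $1/27$ when the three are distinct.

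\textbf{Degenerate cases.} Besides $z=4N$ (where $x=x'$, so for $i\ne j$ there are no representations at all), there are two further coincidences: $x=z-x$, and $z-x=x+d$ (i.e.\ $y=2N$). Each affects a single pair, which you discard.

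\textbf{Dependence structure.} Your path-structure analysis no longer applies. The triple $\{x,z-x,x+d\}$ meets the triples indexed by $x\pm d$, and, through the reflection $x\mapsto z-x$, also those indexed by $z-x$ and $z-x-d$. So the overlap graph is not a union of paths along progressions of difference $d$. It does have maximum degree at most $4$, so a maximal disjoint subfamily keeps at least a fifth of the pairs.

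This is what the paper does. It takes a maximal $Y\subset S=(I-\Delta)\cap J$ with $Y$, $Y+\Delta$, $4N-Y$ pairwise disjoint, so that $|Y|\ge|S|/9$, and uses independent events of probability $1/27$. The constants survive: a fifth of $N/10^4$ pairs at probability $1/27$ gives expectation about $7\cdot 10^{-7}N$, comfortably above $N\cdot 10^{-7}$. Your Chernoff and union-bound steps then go through unchanged.

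\textbf{Two minor points.}
\begin{enumerate}
\item The number of pairs for a core target is at least $\min\bigl(|I|,|J'|,(|I|+|J'|)/100\bigr)\ge N/5000$. It is not $|I|/100$, which can exceed $|J'|$.
\item Your union bound over at most $8N$ targets tacitly assumes that $I$ lies below about $4N$. This holds in every use in Proposition~\ref{random}, but it is not among the stated hypotheses.
\end{enumerate}
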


\begin{proof}
Let $4N+\Delta$ be an arbitrary element of the right hand side.
We are looking for solutions to $x-y=\Delta$, where $x\in X_i(I)$, $y\in X_j(J)$, and $4N-y\in X_i$.

Let $S = (I-\Delta) \cap J$. 
Let $I = (a,b)$, $J=(c,d)$, and $\delta = (b-a+d-c)/100$.
Then
$$a-d + \delta  < \Delta < b - c - \delta,$$
so the infimum of $I-\Delta$ is at most $d-\delta$ and the supremum of $I-\Delta$ is at least $c+\delta$.
Consequently, either $I-\Delta \subset J$, in which case $|S| > N/100$, or $S$ contains one of $(c,c+\delta)$ or $(d-\delta,d)$, in which case $|S| \ge \delta > N/5000$.

Let $Y$ denote a maximal subset of $S$ for which the sets $Y$, $Y+\Delta$, $4N-Y$ are disjoint. 
Then $|Y|\ge  |S|/9$.
There is independent probability $1/27$ for each $y\in Y$ that $y\in X_j(J)$, $y+\Delta\in X_i(I)$, and $4N-y\in X_i$.
As in Lemma~\ref{sum}, the result follows by a Chernoff bound.
\end{proof}

We will always apply this lemma with $i=1$, $j=3$, and $J=(N/4,3N/4)$.

We now describe the construction of Theorem~\ref{construction} needed at stage $k$.
At this point, we have defined $B_i$ and $C_i$ for $i<k$ and been given $G_i$ and $H_i$ for $i \le k$.
Define
$$B(k-1) = \bigcup_{i=1}^{k-1}(B_i \cup (N_{i+1}-G_i)),\ C(k-1) = \bigcup_{i=1}^{k-1}(C_i \cup (N_{i+1}-H_i)),\ A(k-1) = B(k-1) \cup C(k-1).$$

Define $N = N_k$,
\begin{equation}
\label{B_k}
B_k =  \Bigl((4N/3,2N) \cup (8N/3,3N)\cup \bigl(4N-((0,N)\setminus A(k-1))\bigr)\Bigr) \cap X_1(0,4N),
\end{equation}
and
\begin{equation}
\label{C_k}
C_k =  \Bigl((4N/3,2N) \cup (8N/3,3N)\cup \bigl(4N-((0,N)\setminus A(k-1))\bigr)\Bigr) \cap X_2(0,4N).
\end{equation}
As usual, $B(k) = B(k-1) \cup B_k\cup (4N-G_k)$ and $C(k) = C(k-1)\cup C_k\cup (4N-H_k)$.
 Schematically, the construction looks like this:
\vskip 10pt

\begin{tikzpicture}[scale=1.7]
    
    \draw[-Stealth, thick] (-0.2,0) -- (8.5,0);
    
    \foreach \x/\label in {0/0, 2/N, 2.667/{4N/3},  4/{2N}, 5.333/{8N/3}, 6/{3N}, 8/{4N}} {
        \draw (\x,-0.05) -- (\x,0.05);
        \node[below, font=\small] at (\x,-0.15) {$\label$};
    }
    
    \node[font=\scriptsize, blue!70!black] at (0.2,0.3) {B};
    \node[font=\scriptsize, blue!70!black] at (0.6,0.3) {B};
    \node[font=\scriptsize, blue!70!black] at (1.0,0.3) {B};
    \node[font=\scriptsize, blue!70!black] at (1.4,0.3) {B};
    \node[font=\scriptsize, blue!70!black] at (1.8,0.3) {B};
    
\fill[blue!50!white] (2.667,0.22) rectangle (4,0.38);
\draw[thick, pattern=north east lines, pattern color=white] 
    (2.667,0.22) rectangle (4,0.38);
    
\fill[blue!50!white] (5.333,0.22) rectangle (6,0.38);
\draw[thick, pattern=north east lines, pattern color=white] 
    (5.333,0.22) rectangle (6,0.38);

\fill[blue!50!white] (6,0.22) rectangle (8,0.38);
\draw[thick, pattern=north east lines, pattern color=white] 
    (6,0.22) rectangle (8,0.38);
\draw[thick, blue!50!white] (6,0.22) rectangle (8,0.38);
    \node[font=\scriptsize, white] at (7.8,0.3) {\reflectbox{B}};
    \node[font=\scriptsize, white] at (7.6,0.3) {\reflectbox{C}};
    \node[font=\scriptsize, white] at (7.4,0.3) {\reflectbox{B}};
    \node[font=\scriptsize, white] at (7.2,0.3) {\reflectbox{C}};
    \node[font=\scriptsize, white] at (7.0,0.3) {\reflectbox{B}};
    \node[font=\scriptsize, white] at (6.8,0.3) {\reflectbox{C}};
    \node[font=\scriptsize, white] at (6.6,0.3) {\reflectbox{B}};    
    \node[font=\scriptsize, white] at (6.4,0.3) {\reflectbox{C}};
    \node[font=\scriptsize, white] at (6.2,0.3) {\reflectbox{B}};

 \node[right, blue!70!black] at (8.1,0.3) {$B_k$};

    \node[font=\scriptsize, red!70!black] at (0.4,0.7) {C};
    \node[font=\scriptsize, red!70!black] at (0.8,0.7) {C};
    \node[font=\scriptsize, red!70!black] at (1.2,0.7) {C};
    \node[font=\scriptsize, red!70!black] at (1.6,0.7) {C};
    
\fill[red!50!white] (2.667,0.62) rectangle (4,0.78);
\draw[thick, pattern=north east lines, pattern color=white] 
    (2.667,0.62) rectangle (4,0.78);
    
\fill[red!50!white] (5.333,0.62) rectangle (6,0.78);
\draw[thick, pattern=north east lines, pattern color=white] 
    (5.333,0.62) rectangle (6,0.78);

\fill[red!50!white] (6,0.62) rectangle (8,0.78);
\draw[thick, pattern=north east lines, pattern color=white] 
    (6,0.62) rectangle (8,0.78);
\draw[thick, red!50!white] (6,0.62) rectangle (8,0.78);
    \node[font=\scriptsize, white] at (7.8,0.7) {\reflectbox{B}};
    \node[font=\scriptsize, white] at (7.6,0.7) {\reflectbox{C}};
    \node[font=\scriptsize, white] at (7.4,0.7) {\reflectbox{B}};
    \node[font=\scriptsize, white] at (7.2,0.7) {\reflectbox{C}};
    \node[font=\scriptsize, white] at (7.0,0.7) {\reflectbox{B}};
    \node[font=\scriptsize, white] at (6.8,0.7) {\reflectbox{C}};
    \node[font=\scriptsize, white] at (6.6,0.7) {\reflectbox{B}};    
    \node[font=\scriptsize, white] at (6.4,0.7) {\reflectbox{C}};
    \node[font=\scriptsize, white] at (6.2,0.7) {\reflectbox{B}};

  \node[right, red!70!black] at (8.1,0.7) {$C_k$};
\end{tikzpicture}

\noindent The following diagram illustrates the probability that $n\in B(k)$, as a function of $n$.

\begin{tikzpicture}[scale=1.7]
    
    \draw[-Stealth, thick] (-0.2,0) -- (8.5,0) node[right] {};
    \draw[-Stealth, thick] (0,-0.2) -- (0,1.2) node[above] {};
    
    \foreach \x/\label in {0/0, 2/N, 2.667/{4N/3},  4/{2N}, 5.333/{8N/3}, 6/{3N}, 8/{4N}} {
        \draw (\x,-0.05) -- (\x,0.05);
        \node[below, font=\small] at (\x,-0.15) {$\label$};
    }
    
    \draw (-0.05,1) -- (0.05,1);
    \node[left, font=\small] at (-0.1,1) {$\frac{1}{3}$};
    \draw (-0.05,0.667) -- (0.05,0.667);
    \node[left, font=\small] at (-0.1,0.667) {$\frac{2}{9}$};
    
    \draw[thick, blue] (0.0000,0.0000) -- (0.0160,0.0000);
    \draw[thick, blue] (0.0160,0.0000) -- (0.0160,0.6667);
    \draw[thick, blue] (0.0240,0.6667) -- (0.0240,0.0000);
    \draw[thick, blue] (0.0400,0.0000) -- (0.0400,1.0000);
    \draw[thick, blue] (0.0561,1.0000) -- (0.0561,0.0000);
    \draw[thick, blue] (0.0641,0.0000) -- (0.0801,0.0000);
    \draw[thick, blue] (0.0801,0.0000) -- (0.0801,1.0000);
    \draw[thick, blue] (0.0881,1.0000) -- (0.0881,0.6667);
    \draw[thick, blue] (0.0961,0.6667) -- (0.1121,0.6667);
    \draw[thick, blue] (0.1121,0.6667) -- (0.1121,1.0000);
    \draw[thick, blue] (0.1201,1.0000) -- (0.1201,0.0000);
    \draw[thick, blue] (0.1281,0.0000) -- (0.1602,0.0000);
    \draw[thick, blue] (0.1602,0.0000) -- (0.1602,1.0000);
    \draw[thick, blue] (0.1682,1.0000) -- (0.2482,1.0000);
    \draw[thick, blue] (0.2482,1.0000) -- (0.2482,0.0000);
    \draw[thick, blue] (0.2563,0.0000) -- (0.3283,0.0000);
    \draw[thick, blue] (0.3283,0.0000) -- (0.3283,1.0000);
    \draw[thick, blue] (0.3363,1.0000) -- (0.3684,1.0000);
    \draw[thick, blue] (0.3684,1.0000) -- (0.3684,0.7778);
    \draw[thick, blue] (0.3764,0.7778) -- (0.3764,0.6667);
    \draw[thick, blue] (0.3924,0.6667) -- (0.3924,0.7407);
    \draw[thick, blue] (0.4004,0.7407) -- (0.4004,0.6667);
    \draw[thick, blue] (0.4164,0.6667) -- (0.4164,1.0000);
    \draw[thick, blue] (0.4324,1.0000) -- (0.4324,0.6667);
    \draw[thick, blue] (0.4404,0.6667) -- (0.4565,0.6667);
    \draw[thick, blue] (0.4565,0.6667) -- (0.4565,1.0000);
    \draw[thick, blue] (0.4645,1.0000) -- (0.4645,0.7778);
    \draw[thick, blue] (0.4725,0.7778) -- (0.4725,1.0000);
    \draw[thick, blue] (0.4805,1.0000) -- (0.4805,0.6667);
    \draw[thick, blue] (0.4965,0.6667) -- (0.4965,0.0000);
    \draw[thick, blue] (0.5045,0.0000) -- (0.6647,0.0000);
    \draw[thick, blue] (0.6647,0.0000) -- (0.6647,1.0000);
    \draw[thick, blue] (0.6727,1.0000) -- (0.9930,1.0000);
    \draw[thick, blue] (0.9930,1.0000) -- (0.9930,0.0000);
    \draw[thick, blue] (1.0010,0.0000) -- (1.3293,0.0000);
    \draw[thick, blue] (1.3293,0.0000) -- (1.3293,1.0000);
    \draw[thick, blue] (1.3373,1.0000) -- (1.4975,1.0000);
    \draw[thick, blue] (1.4975,1.0000) -- (1.4975,0.7778);
    \draw[thick, blue] (1.5055,0.7778) -- (1.5215,0.7778);
    \draw[thick, blue] (1.5215,0.7778) -- (1.5215,0.7531);
    \draw[thick, blue] (1.5295,0.7531) -- (1.5295,0.6667);
    \draw[thick, blue] (1.5375,0.6667) -- (1.5375,0.7778);
    \draw[thick, blue] (1.5455,0.7778) -- (1.5616,0.7778);
    \draw[thick, blue] (1.5616,0.7778) -- (1.5616,0.6667);
    \draw[thick, blue] (1.5776,0.6667) -- (1.5776,0.7778);
    \draw[thick, blue] (1.5936,0.7778) -- (1.5936,0.7407);
    \draw[thick, blue] (1.6096,0.7407) -- (1.6096,0.7490);
    \draw[thick, blue] (1.6176,0.7490) -- (1.6176,0.6667);
    \draw[thick, blue] (1.6256,0.6667) -- (1.6657,0.6667);
    \draw[thick, blue] (1.6657,0.6667) -- (1.6657,1.0000);
    \draw[thick, blue] (1.6737,1.0000) -- (1.7457,1.0000);
    \draw[thick, blue] (1.7457,1.0000) -- (1.7457,0.6667);
    \draw[thick, blue] (1.7538,0.6667) -- (1.8258,0.6667);
    \draw[thick, blue] (1.8258,0.6667) -- (1.8258,1.0000);
    \draw[thick, blue] (1.8338,1.0000) -- (1.8739,1.0000);
    \draw[thick, blue] (1.8739,1.0000) -- (1.8739,0.7407);
    \draw[thick, blue] (1.8819,0.7407) -- (1.8819,0.7778);
    \draw[thick, blue] (1.8979,0.7778) -- (1.8979,0.7407);
    \draw[thick, blue] (1.9059,0.7407) -- (1.9059,0.6667);
    \draw[thick, blue] (1.9139,0.6667) -- (1.9139,1.0000);
    \draw[thick, blue] (1.9299,1.0000) -- (1.9299,0.6667);
    \draw[thick, blue] (1.9379,0.6667) -- (1.9540,0.6667);
    \draw[thick, blue] (1.9540,0.6667) -- (1.9540,1.0000);
    \draw[thick, blue] (1.9620,1.0000) -- (1.9620,0.6667);
    \draw[thick, blue] (1.9700,0.6667) -- (1.9860,0.6667);
    \draw[thick, blue] (1.9860,0.6667) -- (1.9860,0.7778);
    \draw[thick, blue] (1.9940,0.7778) -- (1.9940,0.0000);
    \draw[thick, blue] (2.0020,0.0000) -- (2.6667,0.0000);
    \draw[thick, blue] (2.6667,0.0000) -- (2.6667,1.0000);
    \draw[thick, blue] (2.6747,1.0000) -- (3.9960,1.0000);
    \draw[thick, blue] (3.9960,1.0000) -- (3.9960,0.0000);
    \draw[thick, blue] (4.0040,0.0000) -- (5.3333,0.0000);
    \draw[thick, blue] (5.3333,0.0000) -- (5.3333,1.0000);
    \draw[thick, blue] (5.3413,1.0000) -- (5.9980,1.0000);
    \draw[thick, blue] (5.9980,1.0000) -- (5.9980,0.7407);
    \draw[thick, blue] (6.0060,0.7407) -- (6.0060,0.7778);
    \draw[thick, blue] (6.0140,0.7778) -- (6.0300,0.7778);
    \draw[thick, blue] (6.0300,0.7778) -- (6.0300,0.6667);
    \draw[thick, blue] (6.0380,0.6667) -- (6.0380,0.7778);
    \draw[thick, blue] (6.0460,0.7778) -- (6.0621,0.7778);
    \draw[thick, blue] (6.0621,0.7778) -- (6.0621,0.6667);
    \draw[thick, blue] (6.0781,0.6667) -- (6.0781,0.7778);
    \draw[thick, blue] (6.0861,0.7778) -- (6.0861,0.7531);
    \draw[thick, blue] (6.0941,0.7531) -- (6.0941,0.7407);
    \draw[thick, blue] (6.1101,0.7407) -- (6.1101,0.7531);
    \draw[thick, blue] (6.1181,0.7531) -- (6.1181,0.6667);
    \draw[thick, blue] (6.1261,0.6667) -- (6.1662,0.6667);
    \draw[thick, blue] (6.1662,0.6667) -- (6.1662,0.7778);
    \draw[thick, blue] (6.1742,0.7778) -- (6.2462,0.7778);
    \draw[thick, blue] (6.2462,0.7778) -- (6.2462,0.6667);
    \draw[thick, blue] (6.2543,0.6667) -- (6.3263,0.6667);
    \draw[thick, blue] (6.3263,0.6667) -- (6.3263,0.7778);
    \draw[thick, blue] (6.3343,0.7778) -- (6.3744,0.7778);
    \draw[thick, blue] (6.3744,0.7778) -- (6.3744,0.7503);
    \draw[thick, blue] (6.3824,0.7503) -- (6.3824,0.7531);
    \draw[thick, blue] (6.3984,0.7531) -- (6.3984,0.7407);
    \draw[thick, blue] (6.4144,0.7407) -- (6.4144,0.7778);
    \draw[thick, blue] (6.4304,0.7778) -- (6.4304,0.7407);
    \draw[thick, blue] (6.4384,0.7407) -- (6.4545,0.7407);
    \draw[thick, blue] (6.4545,0.7407) -- (6.4545,0.7778);
    \draw[thick, blue] (6.4625,0.7778) -- (6.4625,0.7490);
    \draw[thick, blue] (6.4705,0.7490) -- (6.4705,0.7407);
    \draw[thick, blue] (6.4785,0.7407) -- (6.4945,0.7407);
    \draw[thick, blue] (6.4945,0.7407) -- (6.4945,0.6667);
    \draw[thick, blue] (6.5025,0.6667) -- (6.6627,0.6667);
    \draw[thick, blue] (6.6627,0.6667) -- (6.6627,1.0000);
    \draw[thick, blue] (6.6707,1.0000) -- (6.9990,1.0000);
    \draw[thick, blue] (6.9990,1.0000) -- (6.9990,0.6667);
    \draw[thick, blue] (7.0070,0.6667) -- (7.3273,0.6667);
    \draw[thick, blue] (7.3273,0.6667) -- (7.3273,1.0000);
    \draw[thick, blue] (7.3353,1.0000) -- (7.4955,1.0000);
    \draw[thick, blue] (7.4955,1.0000) -- (7.4955,0.7778);
    \draw[thick, blue] (7.5115,0.7778) -- (7.5115,0.6667);
    \draw[thick, blue] (7.5195,0.6667) -- (7.5195,0.7407);
    \draw[thick, blue] (7.5275,0.7407) -- (7.5275,0.6667);
    \draw[thick, blue] (7.5355,0.6667) -- (7.5355,0.7778);
    \draw[thick, blue] (7.5435,0.7778) -- (7.5596,0.7778);
    \draw[thick, blue] (7.5596,0.7778) -- (7.5596,0.6667);
    \draw[thick, blue] (7.5756,0.6667) -- (7.5756,0.7778);
    \draw[thick, blue] (7.5916,0.7778) -- (7.5916,0.7531);
    \draw[thick, blue] (7.5996,0.7531) -- (7.5996,0.7778);
    \draw[thick, blue] (7.6156,0.7778) -- (7.6156,0.7407);
    \draw[thick, blue] (7.6236,0.7407) -- (7.6236,0.6667);
    \draw[thick, blue] (7.6316,0.6667) -- (7.6637,0.6667);
    \draw[thick, blue] (7.6637,0.6667) -- (7.6637,1.0000);
    \draw[thick, blue] (7.6717,1.0000) -- (7.7437,1.0000);
    \draw[thick, blue] (7.7437,1.0000) -- (7.7437,0.6667);
    \draw[thick, blue] (7.7518,0.6667) -- (7.8318,0.6667);
    \draw[thick, blue] (7.8318,0.6667) -- (7.8318,1.0000);
    \draw[thick, blue] (7.8398,1.0000) -- (7.8719,1.0000);
    \draw[thick, blue] (7.8719,1.0000) -- (7.8719,0.6667);
    \draw[thick, blue] (7.8799,0.6667) -- (7.8799,0.7778);
    \draw[thick, blue] (7.8879,0.7778) -- (7.9039,0.7778);
    \draw[thick, blue] (7.9039,0.7778) -- (7.9039,0.6667);
    \draw[thick, blue] (7.9119,0.6667) -- (7.9119,1.0000);
    \draw[thick, blue] (7.9199,1.0000) -- (7.9359,1.0000);
    \draw[thick, blue] (7.9359,1.0000) -- (7.9359,0.6667);
    \draw[thick, blue] (7.9520,0.6667) -- (7.9520,1.0000);
    \draw[thick, blue] (7.9680,1.0000) -- (7.9680,0.7778);
    \draw[thick, blue] (7.9760,0.7778) -- (7.9760,1.0000);
    \draw[thick, blue] (7.9840,1.0000) -- (8.0000,1.0000);
    
\end{tikzpicture}

\begin{prop}
\label{random}
With probability $1$, if $k$ is sufficiently large, then
for all 
$$n\in [3N_k/2,6N_k)\setminus \{4N_k\},$$
we have $\rept{B(k)}n \ge h(n)$ and likewise for $C(k)$.
\end{prop}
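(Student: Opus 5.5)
The plan is to exhibit, inside $B(k)$, a bounded number of explicit ``deterministic interval $\cap X_1$'' pieces, all of size $\asymp N=N_k$. We then check that the cores of their pairwise sumsets cover $[3N/2,6N)\setminus\{4N\}$, and apply Lemma~\ref{sum} and Lemma~\ref{intersection} to each pair. Adding $4N-G_k$ only enlarges $B(k)$, and representation counts are monotone, so it can be ignored. The argument for $C(k)$ is identical with $X_2$ in place of $X_1$. Every element of every piece lies in $(N/4,4N)$, so for $n\in[3N/2,6N)$ any two summands have ratio at most $16<100$. Hence every representation we produce counts toward $\rept{B(k)}{n}$.

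\textbf{Pieces.} Write $N'=N/4=N_{k-1}$. From \eqref{B_k} at stage $k$ we get $P_1=X_1(4N/3,2N)$ and $P_2=X_1(8N/3,3N)$. From stage $k-1$ we get $P_1'=X_1(N/3,N/2)$ and $P_2'=X_1(2N/3,3N/4)$. We also use the reflected piece $4N-X_3(N/4,3N/4)$, intersected with $X_1$. An element $y\in X_3\cap(N/4,3N/4)$ is in neither $B_{k-1}$ nor $C_{k-1}$. I would check that, for large $k$, it also lies outside the reflected sets $N_{i+1}-G_i$ and $N_{i+1}-H_i$ for $i<k$, using the bound $\max F_i\le N_i$. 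Granting this, $y\in(0,N)\setminus A(k-1)$, so $(4N-y)\in B_k$ whenever $4N-y\in X_1$. This is exactly the configuration of Lemma~\ref{intersection} with $i=1$, $j=3$, $J=(N/4,3N/4)$.

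\textbf{Covering.} The core of $P_1'+P_1$ covers roughly $(1.68N,2.49N)$, $P_2'+P_1$ covers $(2.01N,2.74N)$, and $P_1+P_1$ covers $(2.68N,3.98N)$. Further, $P_1+P_2$ covers $(4.01N,4.99N)$, $P_2+P_2$ covers $(5.34N,5.99N)$, and $P_1$ added to the reflected piece covers $(4.6N,5.7N)$. I would juggle these pieces until consecutive cores overlap. The endpoints $3N/2$ and $6N$ sit in the $1\%$ margins, so there I would add extra pairs, such as sums of the previous reflected piece near $(3N/4,N)$ with itself or with $P_1'$, for $[3N/2,5N/3]$. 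The neighbourhood of $4N$ is handled by Lemma~\ref{intersection} with $I=(2N/3,3N/4)$, so that $I\cap X_1=P_2'$. Then $\Delta$ ranges over the core of $(-N/12,N/2)$, which straddles $0$, and the lemma exactly excludes $4N$ itself. For each pair of plain intervals of sizes $\ge m\ge cN$, Lemma~\ref{sum} gives at least $m/2000\ge cN/2000$ representations. Lemma~\ref{intersection} gives more than $N\cdot10^{-7}$. Both exceed $h(n)\le 6N\cdot 10^{-8}$, provided $c\ge 1.2\cdot10^{-4}$, which holds for all the pieces above.

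\textbf{Probability and the main obstacle.} Each bad event has probability $\ll(1-\epsilon)^{cN_k}$. With boundedly many pairs per $k$ and both $B$ and $C$, the probabilities are summable in $k$, so Borel--Cantelli gives the statement for all large $k$ almost surely. The main obstacle is justifying that these probability bounds apply even though $B_k$ depends on $A(k-1)$, hence on the $X_i$ below $N$, and even on the adversarial $G_i,H_i$. My fix is to make every piece a fixed interval intersected with $X_1$ (or $X_3$ reflected), so that the event ``piece $\subseteq B(k)$'' holds deterministically given the support claims above. The lemmas then apply to i.i.d.\ labels, regardless of $\S$. The remaining delicate point is the claim that $X_3$-elements of $(N/4,3N/4)$ avoid all earlier reflections $N_{i+1}-G_i$. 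If that fails, I would shrink $J$ to avoid the at most $\sum_i|F_i|$ exceptional points, which costs only $o(N)$ representations.
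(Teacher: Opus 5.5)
Your framework is the paper's: pieces of the form fixed interval $\cap X_1$, plus $X_1(4N-X_3(N/4,3N/4))$, which lies in $B(k)$ for every realization because $N_{i+1}-G_i\subset[0,N/4]\cup[3N/4,N]$ for $i<k$. These are fed to Lemmas~\ref{sum} and~\ref{intersection}, and Borel--Cantelli finishes. Your handling of the dependence on $\S$ and of the neighbourhood of $4N$ is fine.

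\textbf{The gap is at the bottom of the range.} Your cores begin at about $1.675N$ (from $P_1'+P_1$). So $[3N/2,1.675N]$ is an uncovered stretch of length about $N/6$, not a $1\%$ margin, and the pairs you propose for it do not work.
\begin{itemize}
\item The stage-$(k-1)$ reflected piece lies in $(3N/4,N)$. Its $\S$-independent part $X_1(N-X_3(N/16,3N/16))$ lies in $(13N/16,15N/16)$.
\item Its sums with $P_1'\subset(N/3,N/2)$ are all below $3N/2$.
\item Its sums with itself exceed $3N/2$, but $n=3N/2+t$ has at most $t/2$ of them, fewer than $h(n)$ for small $t$. Using only the independent part, these sums all exceed $13N/8$. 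In any case, neither lemma treats a sum of two reflected pieces.
\end{itemize}
For $n$ just above $3N/2$, every representation in $B(k)$ is of one of two kinds. Either one summand is in $P_1$ and its partner is below about $N/6$, so the partner must come from stage $k-2$ or earlier. Or one summand is in $P_2'$ and its partner is in $(3N/4,N)$.

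\textbf{The repair is easy.} Any of the following covers the bottom:
\begin{itemize}
\item The paper's choice: pair $X_1(N/24,3N/64)\subset B_{k-3}$ with $J=(4N/3,2N)$ in Lemma~\ref{sum}. The core contains $(67N/48,2.04N)$.
\item Pair $X_1(N/12,N/8)\subset B_{k-2}$ with $P_1$. The core is about $(1.424N,2.118N)$.
\item Apply Lemma~\ref{intersection} at scale $N/4$ with $I=(2N/3,3N/4)$ and $J=(N/16,3N/16)$. The core is about $(1.481N,1.685N)$, and every element has more than $2.5\cdot 10^{-8}N>h(n)$ representations.
\end{itemize}
Summand ratios can then reach $48$ rather than $16$, which is still below $100$.

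You also leave $[5.993N,6N)$ unassigned. It is covered by Lemma~\ref{intersection} with $I=(8N/3,3N)$, whose core is about $(5.925N,6.74N)$. This is the paper's final step, and it uses only pieces already on your list.
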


\begin{proof}
We note that $B(k)$ contains the following subsets: 
$$X_1\!\left(\frac{N}{24},\frac{3N}{64}\right), X_1\!\left(\frac{2N}{3},\frac{3N}{4}\right),  X_1\!\left(\frac{4N}{3}, 2N\right), X_1\!\left(\frac{8N}{3}, 3N\right),
X_1(4N-X_3(N/4,3N/4)).
$$
Indeed, the first belongs to $B_{k-3}$ and the second to $B_{k-1}$. For the last inclusion, we are using the fact that $N_{i+1}-G_i\subset [0,N/4] \cup [3N/4,N]$ for all $i<k$.

There exists $\epsilon > 0$ such that the following statements are all true with probability at least $1-(1-\epsilon)^N$.
By Lemma~\ref{sum} applied to $i=1$,
$$I\in \left\{\left(\frac{N}{24},\frac{3N}{64}\right), \left(\frac{2N}{3},\frac{3N}{4}\right),  \left(\frac{4N}{3}, 2N\right), \left(\frac{8N}{3}, 3N\right)\right\},$$
and $J = (4N/3,2N)$, every integer in $(67N/48,95N/24) \cup (25N/6,29N/6)$ has at least $h(N)$ representations over $B(k)$.
Applying Lemma~\ref{intersection} with $I=(2N/3,3N/4)$, every integer in the range $[95N/24,25N/6]$ other than $4N$ has at least
$h(N)$ representations over $B(k)$. Applying the same lemma to $I=(4N/3,2N)$, every integer in $[29N/6, 45N/8]$
has at least $h(N)$ representations over $B(k)$. Lemma~\ref{sum} with $I = J = (8N/3,3N)$ deals with $[45N/8,143N/24]$.
Finally, applying Lemma~\ref{intersection} with $I=(8N/3,3N)$, we cover $[143N/24, 6N]$.
By the Borel-Cantelli lemma, these statements hold for all sufficiently large $k$ with probability $1$.

Note that in all cases the sums being considered have summands whose ratio lies in $[1/100,100]$.
\end{proof}

\begin{proof}[Proof of Theorem~\ref{construction}]
Looking at \eqref{B_k} and \eqref{C_k}, it is clear that Properties 1 and 4 hold.
We have that $B_k$ and $C_k$ are disjoint from one another (because $X_1$ and $X_2$ are so), and they are disjoint from $B(k-1)$ and $C(k-1)$ by size considerations.
Since $G_k,H_k\subset A(k-1)$ by hypothesis on $\S$, Property 2 holds as well. (Note that $4N-G_k, 4N-H_k \subset (3N,4N)$ are disjoint from $A(k-1)$.)
By Proposition~\ref{random}, Property 3 holds with probability $1$.

\end{proof}

We acknowledge the assistance of Claude Sonnet 4.5 with the graphics in this paper and ChatGPT 5.2 Pro with proofreading.

\end{document}